
\documentclass[12pt]{amsart}

\usepackage{amssymb, amscd, txfonts}
\usepackage{graphicx}

\setlength{\textwidth}{16cm}
\setlength{\oddsidemargin}{0cm}
\setlength{\evensidemargin}{0cm}
\setlength{\topmargin}{0cm}
\setlength{\textheight}{22.5cm}

\numberwithin{equation}{section}

\sloppy

\newtheorem{thm}{Theorem}[section]
\newtheorem{prop}[thm]{Proposition}
\newtheorem{lem}[thm]{Lemma}

\theoremstyle{definition}
\newtheorem{defn}[thm]{Definition}

\theoremstyle{remark}
\newtheorem{rem}[thm]{Remark}

\renewcommand{\hom}{\operatorname{Hom}}

\newcommand{\Z}{\mathbb{Z}}

\newcommand{\R}{\mathbb{R}}
\newcommand{\C}{\mathbb{C}}
\newcommand{\F}{\mathbb{F}}

\DeclareMathOperator{\im}{Im}

\begin{document}

\title[Reidemeister torsion and Seifert surgery]{Reidemeister torsion for linear representations and Seifert surgery on knots}
\author[T.~Kitayama]{Takahiro KITAYAMA}
\address{Graduate~School~of~Mathematical~Sciences, the~University~of~Tokyo, 3-8-1~Komaba, Meguro-ku, Tokyo 153-8914, Japan}
\email{kitayama@ms.u-tokyo.ac.jp}
\subjclass[2000]{Primary~57M27, Secondary~55R55, 57Q10}
\keywords{Reidemeister torsion, Dehn surgery, Seifert fibered space}

\begin{abstract}
We study an invariant of a $3$-manifold which consists of Reidemeister torsion for linear representations which pass through a finite group.
We show a Dehn surgery formula on this invariant and compute that of a Seifert manifold over $S^2$.
As a consequence we obtain a necessary condition for a result of Dehn surgery along a knot to be Seifert fibered, which can be applied even in a case where abelian Reidemeister torsion gives no information.
\end{abstract}

\maketitle

\section{Introduction}
Let $K$ be a knot in a homology $3$-sphere and $E_K$ the complement of an open tubular neighborhood of $K$.
We denote by $K(p / q)$ the result of $p / q$-surgery along $K$ for an irreducible fraction $p / q$.
The aim of the paper is to give a necessary condition for $K(p / q)$ to be a certain closed $3$-manifold, in particular a Seifert manifold, using Reidemeister torsion for linear representations.

It is known that the Alexander polynomial $\Delta_K$ of $K$ has useful information on Dehn surgery.
In \cite{Ka1} and \cite{Ka2} Kadokami used abelian Reidemeister torsion to provide obstructions to lens surgery and Seifert surgery in terms of $\Delta_K$.
In \cite{OS1}, \cite{OS2} and \cite{KMOS} Ozsv\'{a}th-Szab\'{o} and Kronheimer-Mrowka-Ozsv\'{a}th-Szab\'{o} gave other obstructions for $K \subset S^3$ to lens surgery and Seifert surgery in terms of the Heegaard Floer homology of $K(0)$, the knot Floer homology of $K$ and the Monopole Floer homology of $K(0)$, which deduce those in terms of $\Delta_K$.
It is of interest to investigate information on Dehn surgery that Reidemeister torsion for linear representations has.
Reidemeister torsion of $E_K$ coincides with a twisted Alexander invariant of $K$ up to multiplication of units.
See \cite{KL}, \cite{Ki1}, \cite{L} and \cite{W} for the definition of twisted Alexander invariants and the relation with Reidemeister torsion.

We fix orientations of $K$ and the ambient homology sphere.
Let $M$ be a closed connected $3$-manifold with $H_1(M) = \Z / p$ and $\varphi \colon G \to GL_n(\F)$ a linear representation over a field $\F$ of a finite group $G$.
All homology groups and cohomology groups are with respect to integral coefficients unless specifically noted.
First we define an invariant $T_K^{\varphi}([g, h])$ of $K$ for $[g, h] \in G \times G / G$, where $G$ acts on $G \times G$ by
\[ g' \cdot (g, h) := (g' g g'^{-1}, g' h g'^{-1}) \]
for $g' \in G$ and $(g, h) \in G \times G$, and an invariant $T_{M, \beta}^{\varphi}$ of $M$ for a surjection $\beta \colon \pi_1 M \to \langle \zeta \rangle$, where $\zeta \in \overline{\F}$ is a primitive $p$-root of $1$ (Definition \ref{def_T}).
These invariants are sets which consist of Reidemeister torsion of $E_K$ and $M$ respectively for representations which pass through $G$ surjectively.
The pair $[g, h]$ corresponds with the images of longitudinal and meridional elements by the representations.
It is worth pointing out that for $K \subset S^3$, if we know all surjective homomorphisms from $\pi_1 E_K$ to $G$, $T_K^{\varphi}([g, h])$ is combinatorially computable from a presentation of $\pi_1 E_K$ as Reidemeister torsion is.
We establish a Dehn surgery formula which computes $T_{K(p / q), \beta}^{\varphi}$ from $T_K^{\varphi}([g, h])$ with $g^q h^p = 1$ (Theorem \ref{thm_S}).
Therefore by this formula we obtain a necessary condition for $K(p / q)$ to be homeomorphic to $M$ if we have $T_{M, \beta}^{\varphi}$.
Next we compute the invariant $T_{M, \beta}^{\varphi}$ for a Seifert manifold $M$ over $S^2$ (Theorem \ref{thm_T}).
Note that every Seifert manifold which is a result of Dehn surgery along a knot has $S^2$ or $\R P^2$ as its base space.
Finally as an application we consider the Kinoshita-Terasaka knot $KT$, whose Alexander polynomial is $1$.
We show that for any integer $q$, $KT(6/q)$ is not homeomorphic to any Seifert manifold over $S^2$ with three singular fibers.
In this case we can check that abelian Reidemeister torsion gives no information.

This paper is organized as follows.
In the next section we give a brief exposition of fundamental facts about Reidemeister torsion.
In Section $3$ we develop a key lemma of Reidemeister torsion on gluing a solid torus along a torus boundary.
Furthermore we define the invariants $T_K^{\varphi}([g, h])$ and $T_{M, \beta}^{\varphi}$ and describe a Dehn surgery formula on these invariants.
Section $4$ is devoted to computations of $T_{M, \beta}^{\varphi}$ for Seifert manifolds over $S^2$.
In the last section we apply these results to the Kinoshita-Terasaka knot.

\section{Reidemeister torsion}
We first review the definition of Reidemeister torsion.
See \cite{M} and \cite{T} for more details.

For given bases $v$ and $w$ of a vector space, we denote by $[v / w]$ the determinant of the base change matrix from $w$ to $v$.

Let $\F$ be a commutative field and $C_* = (C_m \xrightarrow{\partial_m} C_{m-1} \to \cdots \to C_0)$ an acyclic chain complex of finite dimensional vector spaces over $\F$.
For a basis $b_i$ of $\im \partial_{i+1}$ for $i = 0, 1, \dots, m$, choosing a lift of $b_{i-1}$ in $C_i$ and combining it with $b_i$, we obtain a basis $b_i b_{i-1}$ of $C_i$.
\begin{defn}
For  a given basis $\boldsymbol{c} = \{ c_i \}$ of $C_*$, we choose a basis $\{ b_i \}$ of $\im \partial_*$ and define
\[ \tau(C_*, \boldsymbol{c}) := \prod_{i=0}^m [b_i b_{i-1} / c_i]^{(-1)^{i+1}} ~ \in \F^*. \]
\end{defn}
It can be easily checked that $\tau(C_*, \boldsymbol{c})$ does not depend on the choices of $b_i$ and $b_i b_{i-1}$.

The torsion $\tau(C_*, \boldsymbol{c})$ has the following multiplicative property.
Let
\[ 0 \to C_*' \to C_* \to C_*'' \to 0 \]
be a short exact sequence of acyclic chain complexes and $\boldsymbol{c} = \{ c_i \}$, $\boldsymbol{c}' = \{ c_i' \}$ and $\boldsymbol{c}'' = \{ c_i'' \}$ bases of $C_*$, $C_*'$ and $C_*''$ respectively.
Choosing a lift of $c_i''$ in $C_i$ and combining it with the image of $c_i'$ in $C_i$, we obtain a basis $c_i' c_i''$ of $C_i$.
\begin{thm}(\cite[Theorem 3.\ 1]{M}, \cite[Theorem 1.\ 5]{T}) \label{thm_M}
If $[c_i' c_i'' / c_i] = 1$ for all $i$, then
\[ \tau(C_*, \boldsymbol{c}) = \tau(C_*', \boldsymbol{c}') \tau(C_*'', \boldsymbol{c}''). \]
\end{thm}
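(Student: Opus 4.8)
The plan is to prove the identity factor by factor in the product defining $\tau$, reducing each factor to an elementary base-change identity for a short exact sequence of vector spaces, after choosing the auxiliary bases in $C_*$, $C_*'$ and $C_*''$ compatibly.

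First I would check that $\tau(C_*,\boldsymbol c)$ is even defined: since $H_*(C_*') = H_*(C_*'') = 0$, the long exact homology sequence of $0 \to C_*' \to C_* \to C_*'' \to 0$ forces $H_*(C_*) = 0$. Writing $B_i := \im\partial_{i+1} \subseteq C_i$, and likewise $B_i' \subseteq C_i'$, $B_i'' \subseteq C_i''$, acyclicity (so $B_i = \ker\partial_i$, and similarly for the primed complexes) together with a routine diagram chase shows that $0 \to B_i' \to B_i \to B_i'' \to 0$ is exact for every $i$; in particular $B_i \to B_i''$ is onto.

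Next I would fix bases $b_i'$ of $B_i'$ and $b_i''$ of $B_i''$, lift $b_i''$ through $B_i \twoheadrightarrow B_i''$ to a family $\tilde b_i'' \subseteq B_i$, and set $b_i := b_i' \cup \tilde b_i''$, a basis of $B_i$. When forming $b_i b_{i-1}$, I would lift the $b_{i-1}'$-part inside $C_i'$ via $\partial_i'$ and lift the $b_{i-1}''$-part first to $C_i''$ via $\partial_i''$ and then to $C_i$, using for $\tilde b_{i-1}''$ the images in $B_{i-1}$ of the latter lifts. The point of these choices is that the resulting basis $b_i b_{i-1}$ of $C_i$ then coincides, up to a reordering of its vectors, with the basis obtained by combining the image of the basis $b_i' b_{i-1}'$ of $C_i'$ with a lift of the basis $b_i'' b_{i-1}''$ of $C_i''$.

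Finally I would invoke the elementary fact that for an exact sequence $0 \to V' \to V \to V'' \to 0$ with bases $v', w'$ of $V'$ and $v'', w''$ of $V''$, the change-of-basis matrix between the two combined bases of $V$ is block triangular, whence $[v'\widetilde{v''}/w'\widetilde{w''}] = [v'/w']\,[v''/w'']$. Applied in $C_i$, and combined with the hypothesis $[c_i'c_i''/c_i] = 1$ (which lets me replace $c_i$ by the combined basis $c_i'c_i''$ without changing the determinant), this gives $[b_ib_{i-1}/c_i] = [b_i'b_{i-1}'/c_i']\,[b_i''b_{i-1}''/c_i'']$ for each $i$, the reordering above contributing only a sign that is absorbed by the usual conventions. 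Raising to the power $(-1)^{i+1}$ and multiplying over $i$ then yields $\tau(C_*,\boldsymbol c) = \tau(C_*',\boldsymbol c')\,\tau(C_*'',\boldsymbol c'')$, since $\tau$ is independent of the choices of the $b_i$ and of their lifts. The main obstacle is precisely the bookkeeping in the third step: arranging the lifts so that the combined basis of $C_i$ really does split along the short exact sequence, and keeping track of the signs from the reorderings of basis vectors.
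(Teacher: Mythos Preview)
The paper does not supply its own proof of this theorem; it simply cites Milnor \cite[Theorem~3.1]{M} and Turaev \cite[Theorem~1.5]{T}. Your argument is precisely the standard one found in those references: establish acyclicity of $C_*$ from the long exact sequence, deduce the short exact sequence $0 \to B_i' \to B_i \to B_i'' \to 0$ on boundaries, choose the auxiliary bases $b_i$ compatibly with the splitting, and then read off the factorisation of each determinant $[b_ib_{i-1}/c_i]$ from the block-triangular form of the change-of-basis matrix. So there is no alternative route to compare against; you have reconstructed the cited proof.

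One remark on your final caveat: the reordering signs you flag are genuine and do \emph{not} automatically cancel in $\F^*$. The permutation swapping the blocks $\tilde b_i''$ and the lift of $b_{i-1}'$ contributes $(-1)^{\dim B_i''\cdot\dim B_{i-1}'}$, and the product of these over $i$ need not be $+1$. Turaev's Theorem~1.5 in fact records an explicit sign correction; Milnor's version is exact because he works in $K_1$ modulo $\pm 1$. In the present paper this is harmless, since the Reidemeister torsion $\tau_\rho(X)$ is only defined in $\F^*/(\pm 1)^n\,\im\det\circ\rho$, so any sign discrepancy is absorbed. But ``absorbed by the usual conventions'' should be read as ``killed by the $\pm 1$ indeterminacy built into $\tau_\rho$,'' not as a claim that the identity holds on the nose in $\F^*$.
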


Let $X$ be a connected finite CW-complex and $\rho \colon \pi_1 X \to GL_n(R)$ a linear representation over a commutative ring $R$.
We regard $R^n$ as a left $\Z[\pi_1 X]$-module by
\[ \gamma \cdot v :=  \rho(\gamma) v, \]
where $\gamma \in \pi_1 X$ and $v \in R^n$.
Then we define the twisted homology group and the twisted cohomology group of $X$ associated to $\rho$ as follows:
\begin{align*}
H_i^{\rho}(X; R^n) &:= H_i(C_*(\widetilde{X}) \otimes_{\Z[\pi_1 X]} R^n), \\
H_{\rho}^i(X; R^n) &:= H^i(\hom_{\Z[\pi_1 X]}(C_*(\widetilde{X}), R^n)),
\end{align*}
where $\widetilde{X}$ is the universal covering of $X$.
 
\begin{defn}
For a representation $\rho \colon \pi_1 X \to GL_n(\F)$ with $H_*^{\rho}(X; F^n) = 0$, we define the \textit{Reidemeister torsion} $\tau_{\rho}(X)$ of $X$ associated to $\rho$ as follows.
We choose a lift $\tilde{e}_i$ in $\widetilde{X}$ for each cell $e_i$ of $X$ and a basis $\langle f_1, \dots, f_n \rangle$ of $\F^n$.
Then
\[ \tau_{\rho}(X) := [\tau(C_*^{\rho}(X; \F^n), \tilde{\boldsymbol{c}})] ~ \in \F^* / (\pm 1)^n \im \det \circ \rho, \]
where
\[ \tilde{\boldsymbol{c}} := \langle \tilde{e}_1 \otimes f_1, \dots , \tilde{e}_1 \otimes f_n, \dots , \tilde{e}_{\dim C_*(X)} \otimes f_1, \dots , \tilde{e}_{\dim C_*(X)} \otimes f_n \rangle. \]
For a representation $\rho \colon \pi_1 X \to GL_n(\F)$ with $H_*^{\rho}(X; F^n) \neq 0$, we set $\tau_{\rho}(X) = 0$.
\end{defn}
It is known that $\tau_{\rho}(X)$ does not depend on the choices of $\tilde{e}_i$ and $\langle f_1, \dots, f_n \rangle$ and is a simple homotopy invariant.
\begin{rem} \label{rem_A}
For a link exterior of $S^3$, given a presentation of the link group, Reidemeister torsion can be computed efficiently using Fox calculus (cf.\ e.g. \cite{KL}, \cite{Ki1}).
\end{rem}

\section{A surgery formula}
\subsection{A gluing lemma}
In this subsection we discuss a gluing lemma (Proposition \ref{prop_G}) which we need to establish a surgery theorem (Theorem \ref{thm_S}) and to compute Reidemeister torsion of Seifert manifolds (Lemma \ref{lem_T}).

Let $E$ be a compact connected orientable 3-manifold whose boundary consists of tori and $M$ a 3-manifold obtained by gluing a solid torus $Z$ to $E$ along a component of $\partial E$.
We take a generator $\nu \in \pi_1 Z$ and a representation $\rho \colon \pi_1 M \to GL_n(\F)$.
Let us denote by $\pi$ and $i$ the homomorphisms $\pi_1 E \to \pi_1 M$ and $\pi_1 Z \to \pi_1 M$ induced by the inclusion maps respectively.
\begin{prop} \label{prop_G}
If there exists $\gamma \in \pi_1 M$ such that $\det(\rho(\gamma) - I) \neq 0$, then
\[ \tau_{\rho \circ \pi}(E) = [\det(\rho \circ i(\nu) - I)] \tau_{\rho}(M). \]
\end{prop}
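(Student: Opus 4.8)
The plan is to compare $\tau_{\rho}(M)$ and $\tau_{\rho \circ \pi}(E)$ through the short exact sequence of twisted cellular chain complexes of the pair $(M, E)$ together with the multiplicativity of torsion, Theorem \ref{thm_M}. First I would fix a CW structure on $M$ in which $E$ is a subcomplex, the torus $T := \partial Z$ is a subcomplex of $E$, and $Z$ has exactly two cells outside $T$: one $2$-cell --- a meridian disk of the solid torus, attached along a meridian curve of $T$ --- and one $3$-cell. Lifting all cells to the universal cover $\widetilde{M}$ and choosing a basis of $\F^n$ produces a geometric basis of $C_*^{\rho}(M; \F^n)$; the cells of $E$ span a subcomplex naturally isomorphic to $C_*^{\rho \circ \pi}(E; \F^n)$, and by excision the quotient is naturally isomorphic to $C_*^{\rho \circ i}(Z, T; \F^n)$.

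The next step is to compute this quotient. Since $(Z, T) \cong (D^2, \partial D^2) \times S^1$ with the $S^1$-factor carrying the chosen generator $\nu$ of $\pi_1 Z$, writing out the equivariant cellular boundary shows that $C_*^{\rho \circ i}(Z, T; \F^n)$ is the two-term complex
\[ 0 \longrightarrow \F^n \xrightarrow{\ \pm(\rho \circ i(\nu) - I)\ } \F^n \longrightarrow 0 \]
in degrees $3$ and $2$. It is acyclic exactly when $\det(\rho \circ i(\nu) - I) \neq 0$, and in that case the definition of torsion gives immediately $\tau(C_*^{\rho \circ i}(Z, T; \F^n)) = [\det(\rho \circ i(\nu) - I)]^{-1}$, the sign and the choice of $\nu$ being absorbed by the indeterminacy $(\pm 1)^n \im(\det \circ \rho)$.

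Assume first that $M$ is $\rho$-acyclic and $\det(\rho \circ i(\nu) - I) \neq 0$. Then the long exact homology sequence of $0 \to C_*^{\rho \circ \pi}(E) \to C_*^{\rho}(M) \to C_*^{\rho \circ i}(Z, T) \to 0$ shows that $E$ is $\rho \circ \pi$-acyclic, so all three complexes are acyclic. The geometric bases are compatible for Theorem \ref{thm_M}, each $[c_i' c_i'' / c_i]$ being the sign of a permutation of basis vectors, which the indeterminacy absorbs; hence
\[ \tau_{\rho}(M) = \tau_{\rho \circ \pi}(E) \cdot [\det(\rho \circ i(\nu) - I)]^{-1}, \]
and since $\pi$ is surjective the indeterminacy subgroups for $M$ and $E$ coincide, so rearranging yields the asserted identity.

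It remains to treat the non-acyclic cases, where the hypothesis on $\gamma$ enters and which I expect to be the only genuine subtlety. The point is that $\det(\rho(\gamma) - I) \neq 0$ forces $H_3^{\rho}(M; \F^n) = 0$ --- automatically if $\partial M \neq \emptyset$ since then $M$ collapses to a $2$-complex, and $\cong H_{\rho}^0(M; \F^n) = (\F^n)^{\rho(\pi_1 M)} = 0$ by twisted Poincar\'{e} duality when $M$ is closed. Now if $M$ is $\rho$-acyclic but $\det(\rho \circ i(\nu) - I) = 0$, the long exact sequence gives $H_2^{\rho \circ \pi}(E; \F^n) \cong \ker(\rho \circ i(\nu) - I) \neq 0$, so $\tau_{\rho \circ \pi}(E) = 0$ while the right-hand side $[\det(\rho \circ i(\nu) - I)] \tau_{\rho}(M)$ is $0$ as well. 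And if $M$ is not $\rho$-acyclic, then $E$ cannot be $\rho \circ \pi$-acyclic either: otherwise the long exact sequence would give $H_k^{\rho}(M; \F^n) \cong H_k(C_*^{\rho \circ i}(Z, T; \F^n))$, which is nonzero only in degrees $2$ and $3$ with $\dim H_3 = \dim H_2 = \dim \ker(\rho \circ i(\nu) - I)$, so that $M$ not acyclic would force $H_3^{\rho}(M; \F^n) \neq 0$, contradicting the above; hence $\tau_{\rho \circ \pi}(E) = 0$ and the right-hand side vanishes too. This covers all cases.
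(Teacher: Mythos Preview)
Your argument is correct and complete. The route differs from the paper's in two places. First, the paper uses the Mayer--Vietoris sequence
\[ 0 \to C_*^{\rho \circ i}(\partial Z) \to C_*^{\rho \circ \pi}(E) \oplus C_*^{\rho \circ i}(Z) \to C_*^{\rho}(M) \to 0 \]
together with the computations $\tau_{\rho \circ i}(Z) = [\det(\rho \circ i(\nu) - I)^{-1}]$ and $\tau_{\rho \circ i}(\partial Z) = [1]$ from Lemma~\ref{lem_C}, whereas you use the short exact sequence of the pair $(M,E)$ and compute only the single relative torsion $\tau(C_*^{\rho \circ i}(Z,T))$. Your decomposition is a little more economical, needing one auxiliary torsion rather than two; the paper's has the mild advantage that Lemma~\ref{lem_C} records the absolute torsions of $Z$ and $\partial Z$, which are reusable. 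Second, for the vanishing of $H_3^{\rho}(M;\F^n)$ in the closed case the paper passes through the contragredient representation $\rho^{\dagger}$ and the universal coefficient theorem to reach $H_0^{\rho^{\dagger}}(M;\F^n)$, while you invoke Poincar\'e duality directly as $H_3^{\rho}(M;\F^n) \cong H_{\rho}^0(M;\F^n) = (\F^n)^{\rho(\pi_1 M)}$; both are valid, and yours is a shade more direct. The logical organisation of the degenerate cases also differs slightly --- the paper packages them as the equivalence ``$E$ acyclic $\Leftrightarrow$ $M$ acyclic and $\det(\rho\circ i(\nu)-I)\neq 0$'' --- but the content is the same.
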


To prove this proposition we begin by collecting the following computations.
\begin{lem} \label{lem_C}
(i) The following conditions are equivalent. \\
(a) $H_*^{\rho \circ i}(Z; \F^n)$ vanishes. \\
(b) $H_*^{\rho \circ i}(\partial Z; \F^n)$ vanishes. \\
(c) $\det(\rho \circ i(\nu) - I) \neq 0$. \\
(ii) If $\rho$ satisfies one of the conditions in (i), then
\begin{align*}
\tau_{\rho \circ i}(Z) &= [\det(\rho \circ i(\nu) - I)^{-1}], \\
\tau_{\rho \circ i}(\partial Z) &= [1].
\end{align*}
\end{lem}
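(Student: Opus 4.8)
The plan is to compute everything explicitly from the standard CW-structures on $Z \simeq S^1 \times D^2$ and $\partial Z \simeq S^1 \times S^1$, reducing all statements to the single linear-algebra fact that $\det(A - I) \neq 0$ is equivalent to $A - I$ being invertible. First I would observe that $Z$ is homotopy equivalent to a circle, so its twisted chain complex with respect to $\rho \circ i$ is, up to simple homotopy equivalence, $C_* = (\F^n \xrightarrow{\rho \circ i(\nu) - I} \F^n)$ in degrees $1$ and $0$, where I use the usual one-cell/one-vertex structure on $S^1$ with $\nu$ the generator of $\pi_1 S^1$. This complex is acyclic exactly when the map $\rho \circ i(\nu) - I$ is an isomorphism, i.e. when $\det(\rho \circ i(\nu) - I) \neq 0$; this gives (a) $\Leftrightarrow$ (c). When this holds, I compute $\tau_{\rho \circ i}(Z)$ directly from the definition: with the preferred bases, $\tau$ of a two-term acyclic complex $\F^n \xrightarrow{f} \F^n$ is $[\det f]^{(-1)^{?}}$, and tracking the sign convention in the definition of $\tau(C_*, \boldsymbol c)$ (the $(-1)^{i+1}$ exponents, with $i=0,1$) yields $\tau_{\rho \circ i}(Z) = [\det(\rho \circ i(\nu) - I)^{-1}]$; the simple homotopy invariance of Reidemeister torsion lets me transport this from $S^1$ to $Z$.

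Next I would handle $\partial Z = T^2$ with the standard CW-structure: one $0$-cell, two $1$-cells $x, y$ (with $x$ mapping to $\nu$ and $y$ to a meridian, which is null-homotopic in $Z$ hence maps to $1$ under $\rho \circ i$), and one $2$-cell. The twisted chain complex is then
\[
\F^n \xrightarrow{\partial_2} \F^n \oplus \F^n \xrightarrow{\partial_1} \F^n,
\]
where by the usual Fox-calculus computation for the relator $xyx^{-1}y^{-1}$ one finds $\partial_1 = \begin{pmatrix} \rho \circ i(\nu) - I & 0 \end{pmatrix}$ (the $y$-column vanishes since $\rho \circ i(y) = I$ implies $I - I = 0$) and $\partial_2 = \begin{pmatrix} 0 \\ \rho \circ i(\nu) - I \end{pmatrix}$ up to sign. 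If $\det(\rho \circ i(\nu) - I) \neq 0$ this complex is acyclic, and conversely acyclicity forces $\rho \circ i(\nu) - I$ to be injective; this gives (b) $\Leftrightarrow$ (c). The torsion computation then reduces to the torsion of the two complementary two-term pieces, each contributing $\det(\rho \circ i(\nu) - I)^{\pm 1}$ with opposite signs coming from their different homological degrees, so they cancel and $\tau_{\rho \circ i}(\partial Z) = [1]$.

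I expect the main obstacle to be purely bookkeeping rather than conceptual: getting the signs and the placement of the $(-1)^{i+1}$ exponents exactly right in the definition of $\tau(C_*, \boldsymbol c)$, and making sure the choice of lifts $\tilde e_i$ and the identification of $C_*(\widetilde Z) \otimes_{\Z[\pi_1 Z]} \F^n$ with the explicit matrix complex above is consistent with the conventions fixed in Section 2. A secondary point to be careful about is that the statements are about $Z$ and $\partial Z$ with the representation $\rho \circ i$, so strictly I should check that the relevant fundamental group elements ($\nu$, the meridian) behave as claimed under $i$ and then under $\rho$; since the meridian of $\partial Z$ bounds a disk in $Z$, it is trivial in $\pi_1 Z$, which is exactly what makes the $y$-entries drop out. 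Once these conventions are pinned down, both parts (i) and (ii) follow from the single invertibility dichotomy for $\rho \circ i(\nu) - I$ together with Theorem \ref{thm_M} applied to the evident short exact sequences of two-term complexes.
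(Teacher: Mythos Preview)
Your plan is correct and essentially identical to the paper's proof: the paper writes out exactly the same twisted chain complex for $\partial Z$ (with the cosmetic difference $\rho(\nu^{-1})-I$ in place of your $\rho\circ i(\nu)-I$, which agree modulo $(\pm 1)^n\,\im\det\circ\rho$), reads off (b)$\Leftrightarrow$(c) and the cancellation giving $\tau_{\rho\circ i}(\partial Z)=[1]$, and dismisses the $Z$ case as ``very similar''. The only superfluous ingredient in your outline is the appeal to Theorem~\ref{thm_M} at the end; as in the paper, the torsion of the three-term complex for $\partial Z$ can be read off directly from the definition without splitting into short exact sequences.
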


\begin{proof}
We only consider the case of $\partial Z$.
The proof for the case of $Z$ is very similar.
Taking the natural cell structure on $\partial Z$ with one 0-cell, two 1-cells and one 2-cell, one can identify $C_*^{\rho \circ i}(\partial Z; \F^n)$ with
\[ 0 \to \F^n \xrightarrow{\partial_2} \F^{2n} \xrightarrow{\partial_1} F^n \to 0, \]
where
\[ \partial_1 = 
\begin{pmatrix}
\rho(\nu^{-1}) - I & \boldsymbol{0}
\end{pmatrix}
~\text{and}~
\partial_2 =
\begin{pmatrix}
\boldsymbol{0} \\
\rho(\nu^{-1}) - I
\end{pmatrix}.
\]
Therefore $H_*^{\rho \circ i}(\partial Z; \F^n)$ vanishes if and only if $\det(\rho \circ i(\nu) - I) \neq 0$ and for appropriate choices of bases $\{ b_i \}$ and $\langle f_1, \dots, f_n \rangle$,
\[ \tau_{\rho \circ i}(\partial Z) = \biggl[ \frac{\det(\rho \circ i(\nu^{-1}) - I)}{\det(\rho \circ i(\nu^{-1}) - I)} \biggr] = [1]. \]
\end{proof}

We define a representation $\rho^{\dagger}$ of $\pi_1 M$ to be
\[ \rho^{\dagger}(\gamma) := \rho(\gamma^{-1})^T, \]
where $\gamma \in \pi_1 M$.
Then we have an isomorphism
\begin{equation} \label{eq_D}
C_{\rho^{\dagger}}^*(M; \F^n) \cong \hom(C_*^{\rho}(M; \F^n), \F)
\end{equation}
defined by
\[ \psi \mapsto (c \otimes v \mapsto \psi(c)^T v), \]
where $\psi \in C_{\rho^{\dagger}}^*(M; \F^n)$, $c \in C_*(\widetilde{M})$ and $v \in \F^n$.

\begin{proof}[Proof of Proposition \ref{prop_G}]
We first prove that (a) $H_*^{\rho \circ \pi}(E; \F^n)$ vanishes if and only if (b) $H_*^{\rho}(M; \F^n)$ vanishes and (c) $\det(\rho \circ i(\nu) - I) \neq 0$.
By Lemma \ref{lem_C}(i) and the Mayer-Vietoris long exact sequence we check at once that two of the conditions (a), (b) and (c) deduce the other one.
Therefore it suffices to show that (a) deduce (c).

Let us assume that (a) holds and that $\det(\rho \circ i(\nu) - I) = 0$.
From the proof of Lemma \ref{lem_C} one can see that $H_2^{\rho \circ i}(\partial Z; \F^n) \neq 0$.
By the Mayer-Vietoris long exact sequence we obtain $H_3^{\rho}(M; \F^n) \neq 0$.
If $\partial M \neq \emptyset$, then $M$ collapses onto a 2-dimensional subcomplex, which contradicts it.
If $M$ is closed, then by Poincar\'{e} duality, \eqref{eq_D} and the universal coefficient theorem we have
\begin{align*}
H_0^{\rho^{\dagger}}(M; \F^n) &\cong H_{\rho^{\dagger}}^3(M; \F^n) \\
&\cong H^3(\hom(C_*^{\rho}(M; \F^n), \F)) \\
&\cong \hom(H_3^{\rho}(M; \F^n), \F) \qquad \neq 0.
\end{align*}
However, there exists $\gamma \in \pi_1 M$ such that $\det(\rho^{\dagger}(\gamma) - I) \neq 0$, and so $H_0^{\rho^{\dagger}}(M; \F^n) = 0$, a contradiction.

Next we assume that $H_*^{\rho \circ \pi}(E; \F^n)$ vanishes.
It follows from the above argument that $\tau_{\rho}(M)$ is defined.
By Lemma \ref{lem_C}(i) $\tau_{\rho \circ i}(Z)$ and $\tau_{\rho \circ i}(\partial Z)$ are also defined.
Considering the exact sequence
\[ 0 \to C_*^{\rho \circ i}(\partial Z; \F^n) \to C_*^{\rho \circ \pi}(E; \F^n) \oplus C_*^{\rho \circ i}(Z; \F^n) \to C_*^{\rho}(M; \F^n) \to 0, \]
by the multiplicative property of torsion (Theorem \ref{thm_M}) we obtain
\[ \tau_{\rho \circ \pi}(E) \tau_{\rho \circ i}(Z) = \tau_{\rho}(M) \tau_{\rho \circ i}(\partial Z). \]
Combining it with Lemma \ref{lem_C} (ii), we completes the proof.
\end{proof}

\subsection{Description of the formula}
Fix a finite group $G$.
For a group $\Pi$, we denote by $S(\Pi, G)$ the set of conjugacy classes of surjective homomorphisms from $\Pi$ to $G$.
Let $K$ be an oriented smooth knot in an oriented homology $3$-sphere.
We take a longitude-meridian pair $\lambda$, $\mu \in \pi_1 E_K$ which is compatible with the orientations of $K$ and the ambient space and define the abelianization map $\alpha \colon \pi_1 E_K \to \langle t \rangle$ which maps $\mu$ to $t$.

\begin{defn} \label{def_T}
Let $\varphi \colon G \to GL_n(\F)$ be a representation. \\
(i)For $[g, h] \in G \times G / G$, we define $T_K^{\varphi}([g, h])$ to be the set of $\tau_{\alpha \otimes (\varphi \circ \rho)}(E_K)$ for $[\rho] \in S(\pi_1 E_K, G)$ such that $[\rho(\lambda), \rho(\mu)] = [g, h]$, where $\alpha \otimes (\varphi \circ \rho)$ is a representation $\pi_1 E_K \to GL_n(\F(\zeta))$ which maps $\gamma \in \pi_1 E_K$ to $\alpha(\gamma) (\varphi \circ \rho)(\gamma)$. \\
(ii)For a closed connected 3-manifold $M$ with $H_1(M) = \Z / p$ and a surjection $\beta \colon \pi_1 M \to \langle \zeta \rangle$, where $\zeta \in \overline{\F}$ is a primitive $p$-root of $1$, we define $T_{M, \beta}^{\varphi}$ to be the set of $\tau_{\beta \otimes (\varphi \circ \rho)}(M)$ for  $[\rho] \in S(\pi_1 M, G)$, where $\beta \otimes (\varphi \circ \rho)$ is defined as $\alpha \otimes (\phi \circ \rho)$.
\end{defn}

\begin{thm} \label{thm_S}
We take integers $r$ and $s$ such that $p s - q r = 1$.
Let $\beta \colon \pi_1 K(p / q) \to \langle \zeta \rangle$ be a surjection which maps the image $[\mu]$ to $\zeta$.
If for any $[g, h]$ such that $g^q h^p = 1$ and $T_K^{\varphi}([g, h])$ is not empty, $\det(\zeta \varphi(h) - I) \neq 0$ and $\det(\zeta^r \varphi(g^s h^r) - I) \neq 0$, then
\[ T_{K(p / q), \beta}^{\varphi} = \biggl\{ \frac{\tau |_{t =\zeta}}{[\det(\zeta^r \varphi(g^s h^r) - I)]} ~;~ \tau \in T_K^{\varphi}([g, h]) \text{ with } g^q h^p = 1 \biggr\}. \]
\end{thm}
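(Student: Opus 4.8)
The plan is to deduce the formula from Proposition~\ref{prop_G}, applied to $E = E_K$, to $M = K(p/q)$, and to the solid torus $Z$ glued in the surgery, after matching up the representations involved and specializing the variable $t$ to $\zeta$.

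First I would record the correspondence of representations. Since $\pi_1 K(p/q)$ is the quotient of $\pi_1 E_K$ by the normal closure of $\mu^p \lambda^q$, composition with the projection $\pi \colon \pi_1 E_K \to \pi_1 K(p/q)$ identifies $S(\pi_1 K(p/q), G)$ with the set of $[\rho] \in S(\pi_1 E_K, G)$ such that $\rho(\lambda)^q \rho(\mu)^p = 1$; as $\mu$ and $\lambda$ commute in $\pi_1 E_K$ this relation is invariant under conjugation, so that set is the disjoint union, over all $[g, h]$ with $g^q h^p = 1$, of the classes $[\rho]$ with $[\rho(\lambda), \rho(\mu)] = [g, h]$ --- exactly the classes that contribute to the right-hand side of the theorem. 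Fix such a $[\rho]$, write $g = \rho(\lambda)$, $h = \rho(\mu)$, and let $\rho_0 \colon \pi_1 K(p/q) \to G$ be the homomorphism it induces, so that $\rho = \rho_0 \circ \pi$.

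Next I would compute the two representations entering Proposition~\ref{prop_G} for the representation $\beta \otimes (\varphi \circ \rho_0)$ of $\pi_1 K(p/q)$. The homomorphism $\beta \circ \pi$ factors through $H_1(E_K) = \Z$ and sends $\mu \mapsto \zeta$, $\lambda \mapsto 1$; hence $(\beta \otimes (\varphi \circ \rho_0)) \circ \pi$ is precisely the representation obtained from $\alpha \otimes (\varphi \circ \rho)$ by the substitution $t = \zeta$. Writing the surgery so that $\mu^p \lambda^q$ bounds a meridian disk of $Z$ and $\mu^r \lambda^s$ is the complementary core curve (these form a basis of $\pi_1 \partial E_K$ as $ps - qr = 1$), the core of $Z$ represents $[\mu^r \lambda^s]$ in $\pi_1 K(p/q)$; since $\beta([\lambda]) = 1$ and $g$, $h$ commute, this gives $(\beta \otimes (\varphi \circ \rho_0))(i(\nu)) = \zeta^r \varphi(g^s h^r)$, so $\det((\beta \otimes (\varphi \circ \rho_0))(i(\nu)) - I) = \det(\zeta^r \varphi(g^s h^r) - I)$ (which, incidentally, is independent of the choice of $r, s$ because $g^q h^p = 1$ and $\zeta^p = 1$). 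By hypothesis $\det(\zeta \varphi(h) - I) \neq 0$ and $\det(\zeta^r \varphi(g^s h^r) - I) \neq 0$ (applicable since $g^q h^p = 1$ and $T_K^\varphi([g,h])$ is nonempty), so taking $\gamma = [\mu]$ supplies the element required in Proposition~\ref{prop_G} and the second determinant being nonzero allows division; Proposition~\ref{prop_G} then gives
\[ \tau_{(\beta \otimes (\varphi \circ \rho_0)) \circ \pi}(E_K) = [\det(\zeta^r \varphi(g^s h^r) - I)] \, \tau_{\beta \otimes (\varphi \circ \rho_0)}(K(p/q)). \]

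The one step I expect to require genuine care is identifying the left-hand side with $\tau_{\alpha \otimes (\varphi \circ \rho)}(E_K)|_{t = \zeta}$, i.e.\ checking that Reidemeister torsion is compatible with the substitution $t = \zeta$. Using a presentation of $\pi_1 E_K$ and Fox calculus (Remark~\ref{rem_A}), $\tau_{\alpha \otimes (\varphi \circ \rho)}(E_K)$ is, up to the unit ambiguity, a quotient of an element of $\F[t^{\pm 1}]$ by the determinant $\det(t \varphi(h) - I)$ attached to the meridian; since $\det(\zeta \varphi(h) - I) \neq 0$ this rational function has no pole at $t = \zeta$, the substitution is legitimate, and its value there is the Reidemeister torsion of the specialized representation $(\beta \otimes (\varphi \circ \rho_0)) \circ \pi$ (both sides being $0$ when the numerator vanishes at $\zeta$). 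Feeding this into the displayed equation and letting $[\rho]$ range over the set identified in the second paragraph --- whose image under descent is all of $S(\pi_1 K(p/q), G)$ --- yields the asserted equality of sets.
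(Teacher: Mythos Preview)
Your proposal is correct and follows essentially the same route as the paper: apply Proposition~\ref{prop_G} to the gluing $K(p/q) = E_K \cup Z$ for the representation $\beta \otimes (\varphi \circ \rho_0)$, identify the core of $Z$ with $\mu^r \lambda^s$, and then justify the specialization $\tau_{\alpha \otimes (\varphi \circ \rho)}(E_K)|_{t=\zeta} = \tau_{(\beta \otimes (\varphi \circ \rho_0)) \circ \pi}(E_K)$ via a Fox-calculus computation under the hypothesis $\det(\zeta\varphi(h)-I)\neq 0$. The paper isolates this last specialization step as a separate lemma and proves it by passing to a $2$-complex simple-homotopy equivalent to $E_K$ and comparing the Fox matrices over $\F(t)$ and over $\F(\zeta)$, which is exactly what your sketch indicates.
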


This theorem easily follows from Proposition \ref{prop_G} and the following lemma.
\begin{lem}
Let $\alpha' \colon \pi_1 E_K \to \langle \zeta \rangle$ be a surjection which maps $\mu$ to $\zeta$ and $\rho \colon \pi_1 E_K \to GL_n(\F)$ a representation.
If $\det(\zeta \rho(\mu) - I) \neq 0$, then 
\[ \tau_{\alpha' \otimes \rho}(E_K) = \tau_{\alpha \otimes \rho}(E_K)|_{t = \zeta}. \]
\end{lem}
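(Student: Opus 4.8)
The plan is to present both sides as Reidemeister torsions of one and the same chain complex over $R:=\F[t^{\pm1}]$, base changed along two different ring maps. Let $p_\zeta\colon R\to\F(\zeta)$ be the homomorphism with $p_\zeta(t)=\zeta$; it is well defined since $\zeta$ is a unit, and, viewing $\alpha\otimes\rho$ as a representation $\pi_1 E_K\to GL_n(R)$, we have $p_\zeta\circ(\alpha\otimes\rho)=\alpha'\otimes\rho$. Since Reidemeister torsion is a simple homotopy invariant I may replace $E_K$ by a finite $2$-dimensional CW complex $Y$ (for instance the standard spine attached to a Wirtinger presentation $\langle x_1,\dots,x_k\mid r_1,\dots,r_{k-1}\rangle$ of $\pi_1 E_K$). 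Fixing lifts of the cells of $Y$ and a basis of $\F^n$ gives a based chain complex $C_*$ of finitely generated free $R$-modules, concentrated in degrees $0,1,2$, such that $C_*\otimes_R\F(t)$ and $C_*\otimes_{R,p_\zeta}\F(\zeta)$ are the twisted chain complexes computing $\tau_{\alpha\otimes\rho}(E_K)$ and $\tau_{\alpha'\otimes\rho}(E_K)$ respectively, in the induced bases. So it remains to compare the torsion of $C_*\otimes_{R,p_\zeta}\F(\zeta)$ with the image under $p_\zeta$ of the torsion of $C_*\otimes_R\F(t)$.

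First I would put $C_*$ in normal form over the principal ideal domain $R$: by a simultaneous Smith normal form of the differentials one changes the cell bases of the $C_i$ by matrices $P_i\in GL(R)$ so that $C_*$ becomes a direct sum of elementary two-term complexes $R\xrightarrow{\times g}R$ in consecutive degrees (together with stubs $0\to R\to 0$, which occur only if $C_*\otimes_R\F(t)$ is not acyclic). The factors $g$ appearing in degrees $(1,0)$ multiply to $\mathrm{ord}_R H_0(C_*)$ and those in degrees $(2,1)$ multiply to $\mathrm{ord}_R H_1(C_*)$, while $H_2(C_*)=0$ because $Y$ is $2$-dimensional. By the multiplicativity of torsion (Theorem~\ref{thm_M}), $\tau(C_*\otimes_R\F(t))$ in the cell bases equals $\det(P)^{\pm1}\cdot\mathrm{ord}_R H_1(C_*)\cdot\mathrm{ord}_R H_0(C_*)^{-1}$, a genuine ratio of elements of $R$, where $\det(P)^{\pm1}\in R^{*}$ is the accumulated change-of-basis factor. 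Since base change along $p_\zeta$ is applied entrywise, $C_*\otimes_{R,p_\zeta}\F(\zeta)$ is the corresponding direct sum of the complexes $\F(\zeta)\xrightarrow{p_\zeta(g)}\F(\zeta)$ transported by the still-invertible matrices $p_\zeta(P_i)$; hence it is acyclic exactly when $p_\zeta(g)\neq 0$ for every $g$, and in that case its torsion is $p_\zeta$ applied to the above ratio.

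To connect this with the hypothesis: every Wirtinger generator $x_j$ is conjugate to $\mu$, so $\det(\zeta\rho(x_j)-I)=\det(\zeta\rho(\mu)-I)\neq 0$, hence the degree-one differential of $C_*\otimes_{R,p_\zeta}\F(\zeta)$ is surjective and $H_0^{\alpha'\otimes\rho}(E_K;\F(\zeta)^n)=0$. By the universal coefficient exact sequence for the ring map $p_\zeta$ over $R$ this gives $H_0(C_*)\otimes_{R,p_\zeta}\F(\zeta)=0$, so the minimal polynomial of $\zeta$ over $\F$ divides neither $\mathrm{ord}_R H_0(C_*)$ (whence $\tau_{\alpha\otimes\rho}(E_K)$ has no pole at $t=\zeta$, so $\tau_{\alpha\otimes\rho}(E_K)|_{t=\zeta}$ is meaningful) nor any $g$ arising in degrees $(1,0)$. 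If the remaining $g$'s, those in degrees $(2,1)$, all survive $p_\zeta$ --- equivalently $\tau_{\alpha'\otimes\rho}(E_K)\neq 0$ --- the previous paragraph yields $\tau_{\alpha'\otimes\rho}(E_K)=p_\zeta(\tau_{\alpha\otimes\rho}(E_K))=\tau_{\alpha\otimes\rho}(E_K)|_{t=\zeta}$, and since $p_\zeta$ carries $\mathrm{im}(\det\circ(\alpha\otimes\rho))$ onto $\mathrm{im}(\det\circ(\alpha'\otimes\rho))$ this descends to the respective indeterminacy quotients. If instead some such $g$ has $p_\zeta(g)=0$, then $C_*\otimes_{R,p_\zeta}\F(\zeta)$ is not acyclic, so the left-hand side is $0$; but then $\mathrm{ord}_R H_1(C_*)$ vanishes at $\zeta$, so the right-hand side is $0$ as well. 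The degenerate case in which $C_*\otimes_R\F(t)$ itself fails to be acyclic is handled the same way (both sides are $0$).

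The step I expect to be the main obstacle is the one in the second paragraph: pinning down the Smith-normal-form reduction so that it is genuinely compatible with the fixed cell bases --- hence with the indeterminacy $(\pm1)^n\,\mathrm{im}(\det\circ(-))$ in which $\tau$ lives --- and so that every quantity later specialized at $t=\zeta$ really lies in $R$. An alternative which avoids this is to quote the identification of $\tau$ with the alternating product of the orders of the homology $R$-modules (\cite{M},~\cite{T}) and then track, via the universal coefficient sequence for $p_\zeta$, which of $H_0(C_*)$ and $H_1(C_*)$ remain visible after the specialization; this reproduces the same trichotomy.
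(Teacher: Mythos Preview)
Your argument is correct and follows the same overall logic as the paper's, though packaged more abstractly. The paper also collapses $E_K$ to a $2$--complex $W$ and arranges one generator to be $\mu$, but instead of a Smith normal form it invokes the Fox--calculus formula directly: with $A$ the boundary matrix $\partial_2$ with the $\mu$--row deleted, one has
\[
\tau_{\alpha\otimes\rho}(E_K)=\left[\frac{\det(\alpha\otimes\rho)(A)}{\det(t\rho(\mu)-I)}\right],
\]
already a ratio of two elements of $R=\F[t^{\pm1}]$ with denominator nonvanishing at $\zeta$ by hypothesis. An Euler--characteristic count then shows that acyclicity over $\F(\zeta)$, acyclicity over $\F(t)$, and $\det(\alpha'\otimes\rho)(A)\neq 0$ are equivalent, after which the specialization is immediate. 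So your $\mathrm{ord}_R H_0$ and $\mathrm{ord}_R H_1$ are playing the roles of $\det(t\rho(\mu)-I)$ and $\det A$ (up to units), and your trichotomy reproduces the paper's case analysis. The paper's version is shorter and sidesteps the base--change bookkeeping; yours is cleaner conceptually and would adapt more easily if the target were not a $2$--complex.

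Two small remarks. First, the assertion ``$H_2(C_*)=0$ because $Y$ is $2$--dimensional'' is not justified: two--dimensionality only makes $H_2(C_*)$ a free $R$--module, not zero. This is harmless, since a nonzero $H_2$ forces $C_*\otimes_R\F(t)$ to be non--acyclic and you already cover that under the degenerate case. Second, the obstacle you anticipate in the Smith--normal--form step is not a real issue: for a length--two complex over a PID one can diagonalize $\partial_1$ first and then $\partial_2$ inside $\ker\partial_1$, and the accumulated $\det(P)^{\pm1}$ lies in $R^*$; since it specializes to a unit in $\F(\zeta)$, the same factor appears on both sides before passing to the indeterminacy quotients, so it cancels regardless of whether it lies in $\im(\det\circ(\alpha\otimes\rho))$.
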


\begin{proof}
Choose a triangulation of $E_K$ and maximal trees $T$ and $T'$ in the $1$-skeleton and in the dual $1$-skeleton respectively.
Collapsing $T$ and all the $3$-cells along $T'$, we have a $2$-dimensional CW-complex $W$ which is simple homotopic to $E_K$.
Let us denote the number of $1$-cells of $W$ by $m$, then it follows from $\chi(E_K) = 0$ that there are $(m-1)$ $2$-cells.
We can arrange the chain complex $C_*(\widetilde{W})$ of the form
\[ 0 \to C_2(\widetilde{W}) \xrightarrow{\partial_2} C_1(\widetilde{W}) \xrightarrow{\partial_1} C_0(\widetilde{W}) \to 0, \] 
where
\[ \partial_1 =
\begin{pmatrix}
\gamma_1 - 1 & \dots & \gamma_m - 1
\end{pmatrix}
\]
and $\{ \gamma_1, \dots, \gamma_m \}$ is a generator set of $\pi_1 W$.
If necessary, attaching one $1$-cell and one $2$-cell along the word of $\mu$ in $\gamma_1, \dots, \gamma_m$, we can assume that $\gamma_1 = \mu$.
Let $A$ be the result of deleting $1$st row of the matrix of $\partial_2$.

First we assume that $H_*^{\alpha' \otimes \rho}(E_K; \F(\zeta)^n)$ vanishes.
Then $H_2^{\alpha' \otimes \rho}(E_K; \F(\zeta)^n) = 0$ and $\det(\zeta \rho(\mu) - I) \neq 0$ deduce $\det(\alpha' \otimes \rho(A)) \neq 0$, and so $\det(\alpha \otimes \rho(A)) \neq 0$, where $\alpha' \otimes \rho(A)$ is the $(m-1) n$-dimensional matrix with entries in $\F(\zeta)$ which is the result that $\alpha' \otimes \rho$ linearly operates all the entries of $A$ and $\alpha \otimes \rho(A)$ is defined similarly.
This gives $H_2^{\alpha \otimes \rho}(E_K; \F(t)^n) = 0$.
Since $\det(t \rho(\mu) - I) \neq 0$, we obtain $H_0^{\alpha \otimes \rho}(E_K; \F(t)^n) = 0$.
Considering 
\[ \sum_{i = 0}^2 (-1)^i \dim H_i^{\alpha \otimes \rho}(E_K; \F(t)^n) = n \chi(E_K) = 0, \]
we can see that $H_*^{\alpha \otimes \rho}(E_K; \F(t)^n)$ vanishes.
In this case we have
\begin{align*}
\tau_{\alpha \otimes \rho}(E_K)|_{t = \zeta} &= \biggl[ \frac{\det(\alpha \otimes \rho(A))}{\det(t \rho(\mu) - I)} \bigg|_{t = \zeta} \biggr] \\
&= \biggl[ \frac{\det(\alpha' \otimes \rho(A))}{\det(\zeta \rho(\mu) - I)} \biggr] \\
&= \tau_{\alpha' \otimes \rho}(E_K)|_{t = \zeta} \quad \neq 0.
\end{align*}

Now assume that $H_*^{\alpha \otimes \rho}(E_K; \F(t)^n)$ vanishes and that $\tau_{\alpha \otimes \rho}(E_K)|_{t = \zeta} \neq 0$.
Then $\det(\alpha' \otimes \rho(A)) \neq 0$, and so the same argument as above shows that $H_*^{\alpha' \otimes \rho}(E_K; \F(t)^n)$ vanishes.
These prove the lemma.
\end{proof}

\section{Torsion of Seifert manifolds}

In this section we compute the invariant $T_{M, \beta}^{\varphi}$ for a Seifert manifold $M$ over $S^2$.

Let $L$ be the link in $S^3$ represented in Figure \ref{fig_S} and $E_L$ the exterior of an open tubular neighborhood of $L$.
We denote by $M(p_1 / q_1, p_2 / q_2, \dots, p_m / q_m)$ the 3-manifold which has a surgery description shown in Figure \ref{fig_S} and take integers $r_i$ and $s_i$ such that $p_i s_i - q_i r_i = 1$ for $i = 1, \dots, m$.
We assume that $m \geq 2$ and that $p_i \geq 2$ for $i = 1, \dots, m$.

\begin{figure}
\centering
\includegraphics[width=5cm, clip]{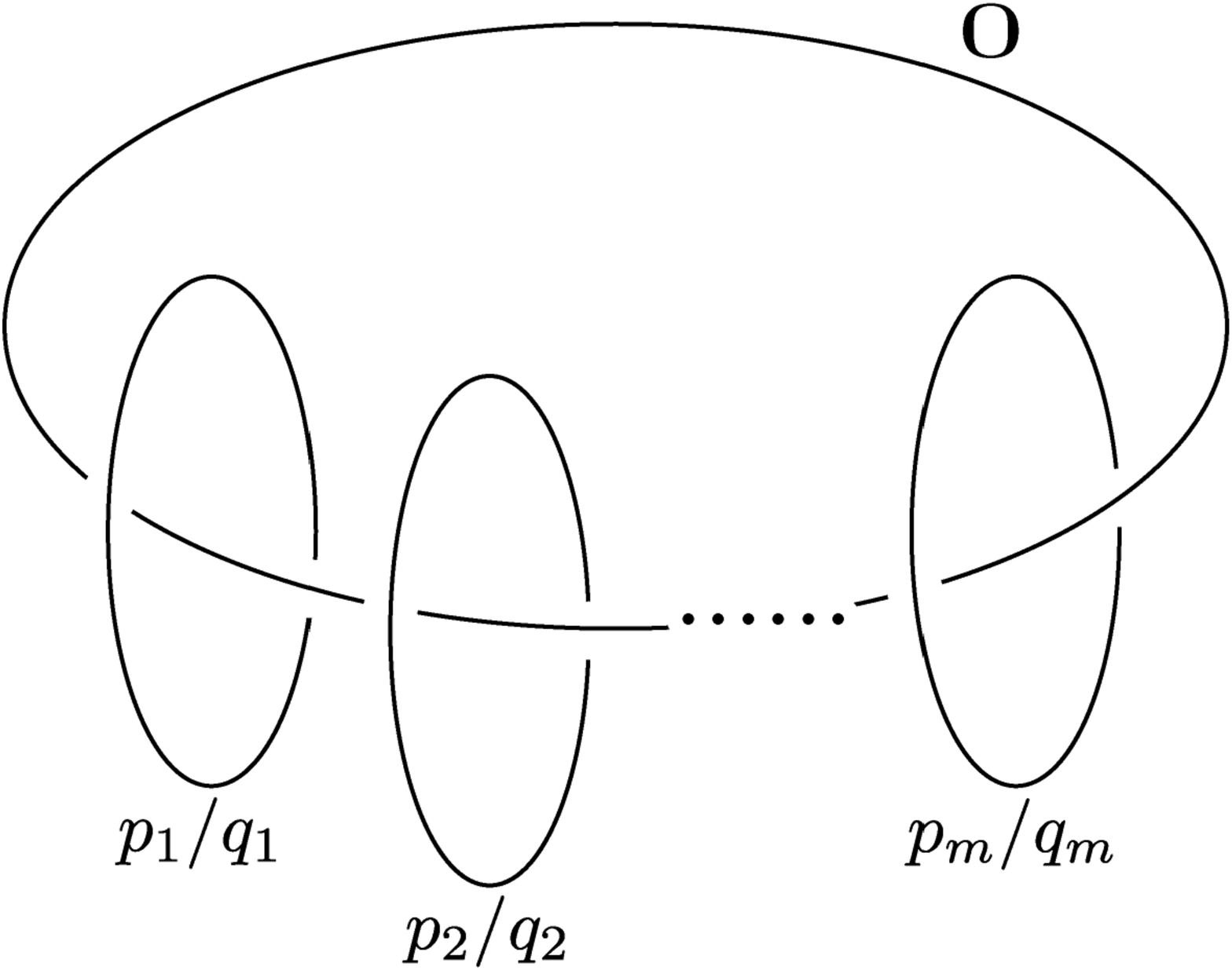}
\caption{The Seifert manifold $M(p_1 / q_1, p_2 / q_2, \dots, p_m / q_m)$}
\label{fig_S}
\end{figure}

From the diagram we have presentations of $\pi_1 E_L$ and $\pi_1 M(p_1 / q_1, \dots, p_m / q_m)$ as follows:
\begin{align}
\pi_1 E_L &= \langle x, y_1, y_2, \dots, y_m ~|~ [x, y_i] = 1 \text{ for } i = 1, \dots, m \rangle, \label{eq_L} \\
\pi_1 M(p_1 / q_1, \dots, p_m / q_m) &= \langle x, y_1, y_2, \dots, y_m ~|~ y_1 \dots y_m = 1, [x, y_i] = x^{q_i} y_i^{p_i} = 1 \text{ for } i = 1, \dots, m \rangle. \label{eq_S}
\end{align}

We fix a finite group $G$.
The group $G$ acts on $G^{m+1}$ by
\[ g' \cdot (g, h_1, \dots, h_m) := (g' g g'^{-1}, g' h_1 g'^{-1}, \dots, g' h_m g'^{-1}) \]
for $g' \in G$ and $(g, h_1, \dots, h_m) \in G^{m+1}$.
\begin{defn}
We define $S_G(p_1 / q_1, \dots, p_m / q_m)$ to be the set of $[g, h_1, \dots, h_m] \in G^{m+1} / G$ such that
\[ \langle g, h_1, \dots, h_m \rangle = G,~ g \in Z(G),~ h_1 \dots h_m = 1 ~\text{ and }~ g^{q_i} h_i^{p_i} = 1 \text{ for } i = 1, \dots, m, \]
where $Z(G)$ is the center of $G$.
\end{defn}

\begin{lem} \label{lem_S}
The map $S(\pi_1 M(p_1 / q_1, \dots, p_m / q_m), G) \to S_G(p_1 / q_1, \dots, p_m / q_m)$ which maps $[\rho]$ to $[\rho(x), \rho(y_1), \dots, \rho(y_m)]$ is bijective.
\end{lem}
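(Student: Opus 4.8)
The plan is to unwind the definitions and use the presentation \eqref{eq_S} directly; no geometry is involved, only the fact that a homomorphism out of a finitely presented group is the same datum as an assignment of the generators satisfying the relations.

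First I would describe the preimage of the map set-theoretically, before passing to conjugacy classes. By \eqref{eq_S}, a homomorphism $\rho \colon \pi_1 M(p_1 / q_1, \dots, p_m / q_m) \to G$ is the same thing as a tuple $(g, h_1, \dots, h_m) := (\rho(x), \rho(y_1), \dots, \rho(y_m)) \in G^{m+1}$ such that the images of the relators are trivial, that is,
\[ h_1 \cdots h_m = 1, \quad [g, h_i] = 1, \quad g^{q_i} h_i^{p_i} = 1 \qquad (i = 1, \dots, m), \]
and $\rho$ is surjective if and only if $\langle g, h_1, \dots, h_m \rangle = G$.

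Second comes the one step requiring a little care: for a tuple with $\langle g, h_1, \dots, h_m \rangle = G$, the conditions $[g, h_i] = 1$ for all $i$ say exactly that $g$ commutes with a generating set of $G$, hence are equivalent to $g \in Z(G)$; conversely $g \in Z(G)$ trivially gives $[g, h_i] = 1$. Therefore $\rho \mapsto (\rho(x), \rho(y_1), \dots, \rho(y_m))$ sets up a bijection between the set of surjective homomorphisms $\pi_1 M(p_1 / q_1, \dots, p_m / q_m) \to G$ and the set of tuples $(g, h_1, \dots, h_m) \in G^{m+1}$ satisfying $\langle g, h_1, \dots, h_m \rangle = G$, $g \in Z(G)$, $h_1 \cdots h_m = 1$ and $g^{q_i} h_i^{p_i} = 1$ for all $i$, which is precisely the defining condition for membership in $S_G(p_1 / q_1, \dots, p_m / q_m)$ before taking the $G$-quotient.

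Finally I would observe that this bijection is equivariant for the conjugation action of $G$: replacing $\rho$ by $g' \rho g'^{-1}$ replaces the tuple by $g' \cdot (g, h_1, \dots, h_m)$ in the notation of the $G$-action on $G^{m+1}$ introduced above. Passing to orbits on both sides yields the asserted bijection $S(\pi_1 M(p_1 / q_1, \dots, p_m / q_m), G) \to S_G(p_1 / q_1, \dots, p_m / q_m)$; injectivity in particular follows because two homomorphisms agreeing up to conjugacy on the generating set $x, y_1, \dots, y_m$ agree up to conjugacy on all of $\pi_1 M(p_1 / q_1, \dots, p_m / q_m)$. The centrality equivalence is the only genuinely non-formal input, so that is the step I would write out with care; everything else is bookkeeping with the presentation.
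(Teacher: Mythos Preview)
Your argument is correct and is exactly the ``straightforward from \eqref{eq_S}'' verification the paper has in mind; the only point needing a word of justification is the equivalence, for a surjective $\rho$, between the relations $[g,h_i]=1$ and the condition $g\in Z(G)$, which you handle. Everything else is, as you say, bookkeeping with the presentation and the conjugation action.
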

The proof is straightforward from \eqref{eq_S}.

\begin{lem} \label{lem_T}
Let $\rho \colon \pi_1 M(p_1 / q_1, \dots, p_m / q_m) \to GL_n(\F)$ be a representation.
If $\det(\rho(x) - I) \neq 0$, then
\[ \tau_{\rho}(M(p_1 / q_1, \dots, p_m / q_m)) = \biggl[ \frac{\det(\rho(x) - I)^{m-2}}{\prod_i^m \det(\rho(x^{s_i} y_i^{r_i}) - I)} \biggr]. \]
\end{lem}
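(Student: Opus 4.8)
The plan is to realise $M:=M(p_1/q_1,\dots,p_m/q_m)$ as the result of attaching $m+1$ solid tori to the link exterior $E_L$ — one along the component of $L$ whose meridian is $x$, and one along each component whose meridian is $y_i$ — and then to strip them off one at a time using the gluing lemma, Proposition~\ref{prop_G}. Comparing \eqref{eq_L} with \eqref{eq_S}, the meridian of the first filling torus is sent in $\pi_1 M$ to $y_1\cdots y_m$ and that of the $i$-th one to $x^{q_i}y_i^{p_i}$; since $p_is_i-q_ir_i=1$, the cores of these solid tori are sent to $x$ and to $x^{s_i}y_i^{r_i}$ respectively (the ambiguity in the core — a power of the filling slope — dies in $\pi_1 M$). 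At every intermediate stage the manifold surjects onto $\pi_1 M$ and contains the class of $x$, which by hypothesis satisfies $\det(\rho(x)-I)\neq 0$, so Proposition~\ref{prop_G} applies at each of the $m+1$ steps (the convention $\tau=0$ for non-acyclic complexes absorbing any intermediate non-vanishing of twisted homology). Iterating gives
\[ \tau_{\rho|_{\pi_1 E_L}}(E_L)=[\det(\rho(x)-I)]\,\prod_{i=1}^m[\det(\rho(x^{s_i}y_i^{r_i})-I)]\cdot\tau_{\rho}(M). \]

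It then suffices to compute $\tau_{\rho|_{\pi_1 E_L}}(E_L)$ and check it equals $[\det(\rho(x)-I)^{m-1}]$. For this I would work with the two-dimensional CW-complex associated with the presentation \eqref{eq_L}, which is simple homotopy equivalent to $E_L$ (cf.\ Remark~\ref{rem_A}); its twisted chain complex is
\[ 0\to\F^{mn}\xrightarrow{\partial_2}\F^{(m+1)n}\xrightarrow{\partial_1}\F^n\to 0 \]
with $\partial_1=(\rho(x)-I\mid\rho(y_1)-I\mid\cdots\mid\rho(y_m)-I)$ and, after reducing the Fox derivatives of $[x,y_i]$ in $\Z[\pi_1 E_L]$ via $xy_i=y_ix$, with $\partial_2$ the block matrix whose $x$-row is $(I-\rho(y_1),\dots,I-\rho(y_m))$ and whose $y_i$-row is $\rho(x)-I$ in the $i$-th slot and $0$ elsewhere. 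Since $\rho(x)-I$ is invertible, $\partial_1$ is surjective and $\partial_2$ is injective, so $H_0=H_2=0$ and then $\sum_i(-1)^i\dim H_i=n\chi(E_L)=0$ forces acyclicity. Choosing for $\im\partial_1$ the basis $\{(\rho(x)-I)f_j\}$ (lifted to the $x$-cells) and for $\im\partial_2$ the image of the standard basis of $C_2$, the defining product for $\tau$ collapses to $[\det(\rho(x)-I)]^{-1}$ from degree $0$ times a block-triangular determinant equal to $[\det(\rho(x)-I)^m]$ from degree $1$, whence $\tau_{\rho|_{\pi_1 E_L}}(E_L)=[\det(\rho(x)-I)^{m-1}]$. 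Substituting into the displayed identity yields the asserted formula, and incidentally forces $\det(\rho(x^{s_i}y_i^{r_i})-I)\neq 0$ for all $i$.

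The routine points are the Fox-derivative calculation and the block-determinant evaluation, all signs and factors lying in $\im\det\circ\rho$ being irrelevant modulo the ambiguity group in which $\tau_\rho$ takes values. The step requiring the most care — and the main thing to get right — is the bookkeeping of the gluing: correctly identifying the images in $\pi_1 M$ of the cores of the $m+1$ filling solid tori as $x$ and $x^{s_i}y_i^{r_i}$ with the right framing conventions, and verifying that Proposition~\ref{prop_G} is legitimately applicable at every stage even before acyclicity of the intermediate pieces has been established.
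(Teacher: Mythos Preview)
Your approach is exactly the paper's: compute $\tau_{\rho\circ\pi}(E_L)$ from the presentation \eqref{eq_L} via Fox calculus, then apply Proposition~\ref{prop_G} once for each of the $m+1$ Dehn fillings, the cores being $x$ and the $x^{s_i}y_i^{r_i}$. Your exponent $m-1$ for $\tau_{\rho\circ\pi}(E_L)$ is in fact the correct one (the paper's displayed $m-2$ appears to be a slip), and after dividing by the $m+1$ gluing factors it yields the stated formula.
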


\begin{proof}
Let $\pi \colon \pi_1 E_L \to \pi_1 M(p_1 / q_1, \dots, p_m / q_m)$ be the natural surjection.
From \eqref{eq_L} we can directly compute that 
\[ \tau_{\rho \circ \pi}(E_L) = [\det(\rho(x) - I)^{m-2}] \]
(Remark \ref{rem_A}).
The details are left to the reader.
Now we use Proposition \ref{prop_G} repetitiously, and the lemma follows.
\end{proof}

Now we easily obtain the next theorem as a corollary of Lemma \ref{lem_S} and Lemma \ref{lem_T}. 
\begin{thm} \label{thm_T}
Let $\varphi \colon G \to GL_n(\F)$ be a representation and $\beta \colon \pi_1 M(p_1 / q_1, \dots, p_m / q_m) \to \langle \zeta \rangle$ a surjection, which maps $x$ to $\zeta^a$ and $y_i$ to $\zeta^{b_i}$ for $i = 1, \dots, m$.
If for any $[g, h_1, \dots. h_m] \in S_G(p_1 / q_1, \dots, p_m / q_m)$, $\det(\zeta^a \varphi(g) - I) \neq 0$, then
\[ T_{M(p_1 / q_1, \dots, p_m / q_m), \beta}^{\varphi} = \biggl\{ \biggl[ \frac{\det(\zeta^a \varphi(g) - I)^{m-2}}{\prod_{i = 1}^m \det(\zeta^{a s_i + b_i r_i} \varphi(g^{s_i} h_i^{r_i}) - I)} \biggr] ~;~ [g, h_1, \dots, h_m] \in S_G(p_1 / q_1, \dots, p_m / q_m) \biggr\}. \]
\end{thm}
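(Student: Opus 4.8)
The plan is to combine the parametrization of surjections from Lemma \ref{lem_S} with the torsion computation of Lemma \ref{lem_T}, applied to the twisted representation $\beta \otimes (\varphi \circ \rho)$. By Lemma \ref{lem_S}, the assignment $[\rho] \mapsto [\rho(x), \rho(y_1), \dots, \rho(y_m)]$ is a bijection from $S(\pi_1 M(p_1 / q_1, \dots, p_m / q_m), G)$ onto $S_G(p_1 / q_1, \dots, p_m / q_m)$. Hence it suffices to fix a class $[g, h_1, \dots, h_m] \in S_G(p_1 / q_1, \dots, p_m / q_m)$, choose a representative $\rho$ with $(\rho(x), \rho(y_1), \dots, \rho(y_m)) = (g, h_1, \dots, h_m)$, compute $\tau_{\beta \otimes (\varphi \circ \rho)}(M(p_1 / q_1, \dots, p_m / q_m))$, and check that the answer depends only on $[g, h_1, \dots, h_m]$.

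Write $\rho' := \beta \otimes (\varphi \circ \rho) \colon \pi_1 M(p_1 / q_1, \dots, p_m / q_m) \to GL_n(\F(\zeta))$. Since $\beta$ takes values in scalar matrices, $\rho'$ is a homomorphism with $\rho'(\gamma) = \beta(\gamma)\, \varphi(\rho(\gamma))$ for all $\gamma$. In particular $\rho'(x) = \zeta^a \varphi(g)$, so the hypothesis $\det(\zeta^a \varphi(g) - I) \neq 0$ of the theorem is exactly the condition $\det(\rho'(x) - I) \neq 0$ required to invoke Lemma \ref{lem_T} (with $\F$ replaced by $\F(\zeta)$). Moreover $\beta(x^{s_i} y_i^{r_i}) = \zeta^{a s_i + b_i r_i}$ and $\rho(x^{s_i} y_i^{r_i}) = g^{s_i} h_i^{r_i}$, so $\rho'(x^{s_i} y_i^{r_i}) = \zeta^{a s_i + b_i r_i} \varphi(g^{s_i} h_i^{r_i})$ for $i = 1, \dots, m$. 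Substituting these matrices into Lemma \ref{lem_T} gives
\[ \tau_{\rho'}(M(p_1 / q_1, \dots, p_m / q_m)) = \biggl[ \frac{\det(\zeta^a \varphi(g) - I)^{m-2}}{\prod_{i = 1}^m \det(\zeta^{a s_i + b_i r_i} \varphi(g^{s_i} h_i^{r_i}) - I)} \biggr]. \]

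Finally I would check well-definedness. The left-hand side depends only on the conjugacy class of $\rho'$, hence only on $[\rho]$; and on the right-hand side, replacing $(g, h_1, \dots, h_m)$ by $g' \cdot (g, h_1, \dots, h_m)$ conjugates each of the matrices $\zeta^a \varphi(g) - I$ and $\zeta^{a s_i + b_i r_i} \varphi(g^{s_i} h_i^{r_i}) - I$ by $\varphi(g')$, so all the determinants, and thus the whole bracketed expression, are unchanged. Letting $[\rho]$ run over $S(\pi_1 M(p_1 / q_1, \dots, p_m / q_m), G)$, equivalently letting $[g, h_1, \dots, h_m]$ run over $S_G(p_1 / q_1, \dots, p_m / q_m)$ via Lemma \ref{lem_S}, yields the claimed description of $T^{\varphi}_{M(p_1 / q_1, \dots, p_m / q_m), \beta}$. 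There is no serious obstacle once Lemmas \ref{lem_S} and \ref{lem_T} are in hand; the only point genuinely worth watching is that Lemma \ref{lem_T} must apply to \emph{every} relevant $\rho'$, which is precisely what the blanket hypothesis $\det(\zeta^a \varphi(g) - I) \neq 0$ over all of $S_G(p_1 / q_1, \dots, p_m / q_m)$ guarantees.
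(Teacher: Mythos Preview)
Your argument is correct and follows exactly the route the paper takes: the theorem is stated as an immediate corollary of Lemma \ref{lem_S} (parametrizing $S(\pi_1 M,G)$ by $S_G(p_1/q_1,\dots,p_m/q_m)$) and Lemma \ref{lem_T} (computing $\tau_{\rho'}$ once $\det(\rho'(x)-I)\neq 0$), applied to $\rho' = \beta \otimes (\varphi \circ \rho)$. Your explicit verification that the blanket hypothesis matches the condition needed for Lemma \ref{lem_T}, and your well-definedness check under conjugation, simply spell out what the paper leaves implicit.
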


\begin{rem}
In \cite{Ki2} Kitano gave a formula which computes $\tau_{\rho}(M)$ for a general Seifert manifold $M$ and an irreducible representation $\rho \colon \pi_1 M \to SL_n(\C)$ such that $H_*^{\rho}(M; \C^n)$ vanishes.
\end{rem}

\section{Application}

Let $KT$ be the Kinoshita-Terasaka knot illustrated in Figure \ref{fig_KT}.
It is well known that $\Delta_{KT} = 1$.
As an application we show that $KT(6/q)$ is not homeomorphic to $M(p_1/q_1, p_2/q_2, p_3/q_3)$ for any integer $q$ and any pair $(p_1/q_1, p_2/q_2, p_3/q_3)$.

\begin{figure}
\centering
\includegraphics[width=5cm, clip]{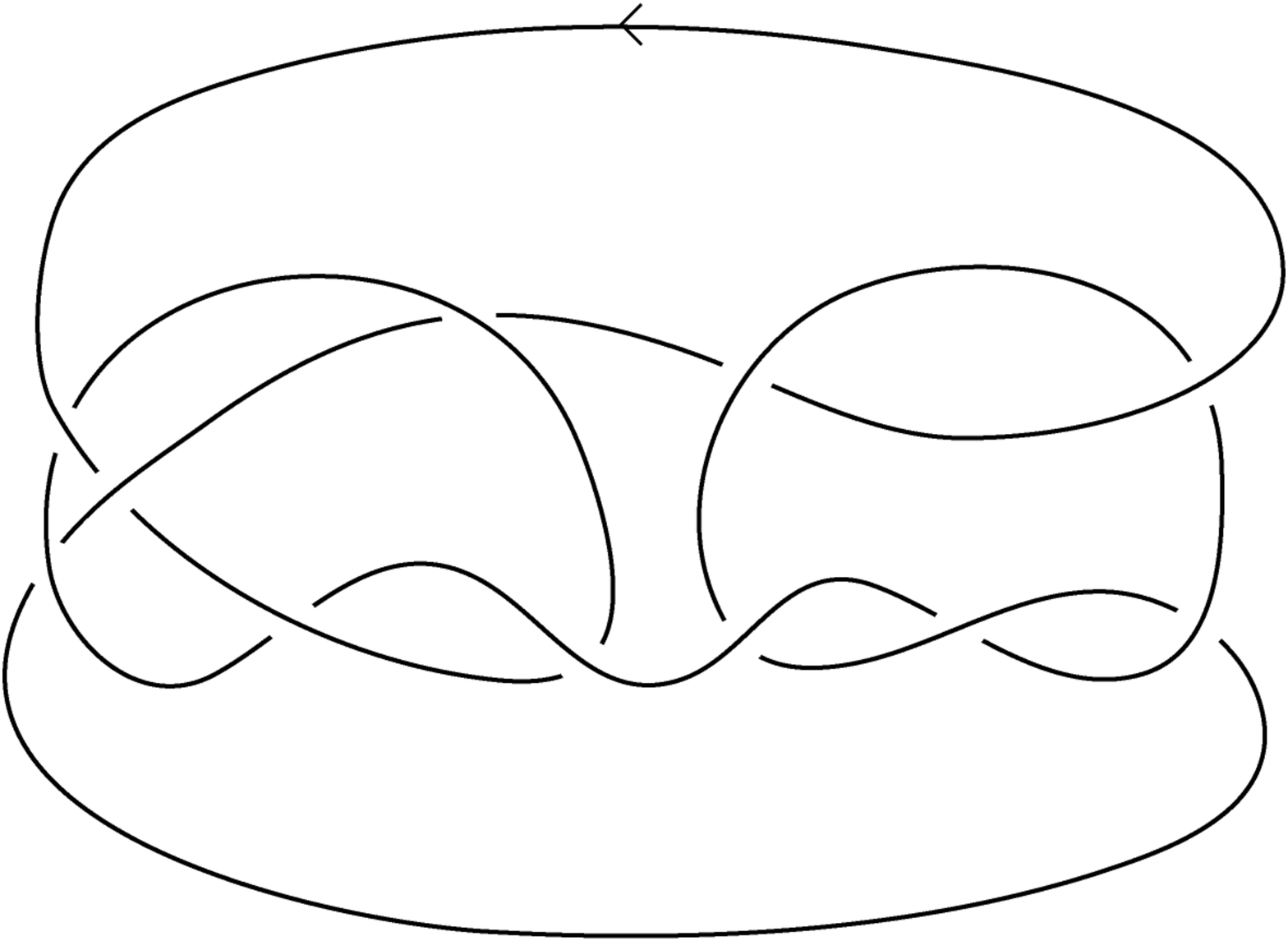}
\caption{The Kinoshita-Terasaka knot}
\label{fig_KT}
\end{figure}

For example, let us consider $M(3 / 2, -3, -5)$, whose $1$st homology group is $\Z/6$.
We set $\zeta = e^{\frac{\sqrt{-1} \pi}{3}}$.
Since we can compute that
\[ \tau_{\alpha'}(E_{KT}) = [1] \]
for any surjection $\alpha' \colon \pi_1 E_K \to \langle \zeta \rangle$ (Remark \ref{rem_A}), it follows from Proposition $\ref{prop_G}$ that
\[ \tau_{\beta}(KT(6 / q)) = [1] \]
for any surjection $\beta \colon \pi_1 KT(6 / q) \to \langle \zeta \rangle$.
Furthermore Lemma \ref{lem_T} yields
\[ \tau_{\beta'}(M(3 / 2, -3, -5)) = [1] \]
for any surjection $\beta' \colon \pi_1 M \to \langle \zeta \rangle$, hence abelian Reidemeister torsion gives no information in this case.

First we have the following data on $KT(6/q)$.
By direct computations we obtain
\begin{align}
S(\pi_1 KT(6/q), \mathfrak{A}_4) &= \emptyset, \label{eq_E1} \\
\sharp S(\pi_1 KT(6/q), \mathfrak{A}_5) &= 2, \label{eq_E2}
\end{align}
where $\mathfrak{A}_n$ is the alternating group on $n$ letters.
Let $\varphi \colon \mathfrak{A}_5 \to SL_4(\C)$ be the representation induced by the natural action of the symmetric group $\mathfrak{S}_5$ on $\C^5 / \C(1, 1, 1, 1, 1)$.
Then we computes that
\[ T_{KT}^{\varphi}([g, h]) = 
\begin{cases}
\{ [(t^2 + t + 1) (5 t^6 + 5 t^5 - 5 t^4 - 9 t^3 - 5 t^2 + 5 t + 5) (t - 1)^4] \} \\
\text{if } [g, h] = [1, (3, 4, 5)], \\
\emptyset \quad \text{otherwise}
\end{cases}
\]
(Remark \ref{rem_A}).
By Theorem \ref{thm_S} we have
\begin{equation} \label{eq_E3}
T_{KT(6 / q), \beta}^{\varphi} = \{ [29] \}
\end{equation}
for any surjection $\beta \colon \pi_1 KT(6 / q) \to \langle \zeta \rangle$. 

Second we have the following lemma on $M(p_1/q_1, p_2/q_2, p_3/q_3)$.
\begin{lem} \label{lem_A}
Let $\beta' \colon \pi_1 M(p_1/q_1, p_2/q_2, p_3/q_3) \to \langle \zeta \rangle$ be a surjection, which maps $x$ to $\zeta^a$.
If $6 \nmid a$, then for any $\tau \in T_{M, \beta'}^{\varphi}$,
\[ |\tau| = \frac{A}{B_1 B_2 B_3}, \]
where
\begin{align*}
A &= 1, 9, 16, \\
B_i &= 1, 2, 4, 9, 16 \quad \text{for } i = 1, 2, 3.
\end{align*}
\end{lem}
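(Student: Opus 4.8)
The plan is to apply Theorem~\ref{thm_T} to $M = M(p_1/q_1,p_2/q_2,p_3/q_3)$ (so $m = 3$) and to analyze the absolute value of each element of the resulting set. By Theorem~\ref{thm_T}, every element of $T_{M,\beta'}^{\varphi}$ has the form
\[
\biggl[\frac{\det(\zeta^a\varphi(g) - I)}{\prod_{i=1}^3 \det(\zeta^{a s_i + b_i r_i}\varphi(g^{s_i}h_i^{r_i}) - I)}\biggr]
\]
for some $[g,h_1,h_2,h_3] \in S_G(p_1/q_1,p_2/q_2,p_3/q_3)$ with $G = \mathfrak{A}_5$ (the relevant finite group, since we are using the $4$-dimensional representation $\varphi$ of $\mathfrak{A}_5$). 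So the numerator is $A = |\det(\zeta^a\varphi(g) - I)|$ and each denominator factor is $B_i = |\det(\zeta^{a s_i + b_i r_i}\varphi(g^{s_i}h_i^{r_i}) - I)|$, and it remains to enumerate the possible values of $|\det(\omega\varphi(k) - I)|$ as $\omega$ ranges over roots of unity that can occur and $k$ over the group elements that can occur.

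First I would pin down the constraints. Since $g \in Z(\mathfrak{A}_5) = \{1\}$, we have $g = 1$; this immediately forces $h_1 h_2 h_3 = 1$, $h_i^{p_i} = 1$, and $\langle h_1,h_2,h_3\rangle = \mathfrak{A}_5$. Also $\det(\varphi(g^{s_i}h_i^{r_i}) - I) = \det(\varphi(h_i^{r_i}) - I)$ after using $g = 1$. The eigenvalues of $\varphi(k)$ for $k \in \mathfrak{A}_5$ are roots of unity determined by the cycle type: for a $5$-cycle they are the four primitive $5$th roots of unity; for a $3$-cycle (acting on $\mathbb{C}^5/\mathbb{C}(1,\dots,1)$) they are $1,1,\omega_3,\omega_3^2$ with $\omega_3$ a primitive cube root; for a $(2,2)$-element they are $1,1,-1,-1$; for the identity they are $1,1,1,1$. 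Combined with the hypothesis $6 \nmid a$, the factor $\zeta^a$ (with $\zeta$ a primitive $p$th root of unity, $p = p_1 q_2 q_3 + \cdots$ the order of $H_1$) is a nontrivial root of unity when it multiplies $\varphi(g) = I$; so $A = |\det(\zeta^a I - I)| = |\zeta^a - 1|^4$, and I would check that over the admissible values of $\zeta^a$ this gives exactly $1, 9, 16$ — concretely, $|\zeta^a - 1|^4 \in \{|{-1}-1|^4, |\zeta_3 - 1|^4, \dots\}$ and the relevant cases collapse to $\{9,16\}$ plus the degenerate possibility $A=1$ when the root contribution is trivial. The hardest bookkeeping is to verify that the exponents $a s_i + b_i r_i$ modulo the order of $h_i^{r_i}$ together with the cycle type of $h_i$ produce only the listed $B_i$; this requires using $p_i s_i - q_i r_i = 1$ together with $g^{q_i}h_i^{p_i} = 1$ (i.e.\ $h_i^{p_i}=1$) to see that $\zeta^{a s_i + b_i r_i}\varphi(h_i^{r_i})$ has eigenvalues that are roots of unity of order dividing $\mathrm{lcm}$ of small numbers, so that $|\det(\zeta^{a s_i+b_i r_i}\varphi(h_i^{r_i}) - I)|$ lands in $\{1,2,4,9,16\}$.

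The main obstacle I anticipate is the case analysis over cycle types of $h_i$ combined with the twisting exponents: one must rule out, for instance, values like $5$ (which would arise from $|\det(\zeta\varphi(\text{5-cycle}) - I)|$ with $\zeta$ a primitive $5$th root, since $\prod(\zeta_5^j - 1)$ over $j=1,\dots,4$ is $5$) by showing such a configuration is incompatible with the order of $H_1(M) = \mathbb{Z}/p$ and the constraint $6 \nmid a$ — or alternatively by observing that when the eigenvalue $1$ of $\varphi(h_i^{r_i})$ is twisted by a \emph{nontrivial} root $\zeta^{a s_i + b_i r_i}$ we still get a factor of the form $|\zeta^{a s_i+b_i r_i} - 1|$, and tracking which such products are achievable. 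I would organize this by first treating $g=1$ uniformly, then splitting into the subcases where each $h_i$ is trivial, a $(2,2)$-element, a $3$-cycle, or a $5$-cycle, and in each subcase computing $|\det(\omega\varphi(h_i^{r_i}) - I)|$ for $\omega$ a root of unity of the appropriate order; collecting the outcomes yields exactly $B_i \in \{1,2,4,9,16\}$ and $A \in \{1,9,16\}$, completing the proof.
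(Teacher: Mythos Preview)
Your overall plan---apply Theorem~\ref{thm_T} with $m=3$, use $Z(\mathfrak{A}_5)=\{1\}$ to get $g=1$, and then enumerate the possible absolute values of the numerator and of each denominator factor---is exactly the paper's approach. The paper's proof is two lines: it writes $\tau = [(\zeta^a-1)^4/\prod_i \det(\zeta^{c_i}\varphi(h_i')-I)]$ for some $c_i\in\Z$ and $h_i'\in\mathfrak{A}_5$, and then simply asserts the lists of possible values.

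Where you go astray is in your handling of $\zeta$. In Section~5 the root $\zeta$ is fixed once and for all as $e^{\sqrt{-1}\pi/3}$, a primitive $6$th root of unity; the invariant $T_{M,\beta'}^{\varphi}$ is only defined (Definition~\ref{def_T}(ii)) when $H_1(M)=\Z/p$ and $\zeta$ is a primitive $p$th root, so implicitly $H_1(M)=\Z/6$ here. Once you know this, the hypothesis $6\nmid a$ is precisely the condition $\zeta^a\neq 1$ needed for Theorem~\ref{thm_T} to apply, and the numerator computation is just $|\zeta^a-1|^4$ for $a\in\{1,\dots,5\}$, giving $1,9,16$ directly. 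Your speculation that $\zeta$ might be a $5$th root (leading you to worry about the value $5$) is therefore off target, and the elaborate case analysis you propose---tracking $h_i^{p_i}=1$, the relation $p_is_i-q_ir_i=1$, cycle types versus twisting exponents---is unnecessary. The paper does not use any of those constraints: it simply lets $c_i$ range over $\Z/6$ and $h_i'$ over $\mathfrak{A}_5$ and tabulates $|\det(\zeta^{c_i}\varphi(h_i')-I)|$. So your route is correct in spirit but is burdened by a misidentification of $\zeta$ that generates phantom obstacles.
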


\begin{proof}
By Theorem \ref{thm_T} there exist $c_i \in \Z$ and $h_i' \in \mathfrak{A}_5$ for $i = 1, 2, 3$ such that
\[ \tau = \left[ \frac{(\zeta^a - 1)^4}{\prod_{i=1}^3 \det(\zeta^{c_i} \varphi(h_i') - I)} \right]. \]
Note that $Z(\mathfrak{A}_5) = 1$.
The possible values of $|(\zeta^a - 1)^4|$ are $1$, $9$, $16$ and these of $|\det(\zeta^{c_i} \varphi(h_i') - I)|$ are $1$, $2$, $4$, $9$, $16$, which proves the lemma.
\end{proof}

Now let us suppose that $KT(6/q)$ is homeomorphic to $M(p_1/q_1, p_2/q_2, p_3/q_3)$.
Since $H_1(M(p_1/q_1, p_2/q_2, p_3/q_3)) = \Z/6$ we have
\begin{equation} \label{eq_E4}
|q_1 p_2 p_3 + p_1 q_2 p_3 + p_1 p_2 q_3| = 6.
\end{equation}
From \eqref{eq_E1} and \eqref{eq_E2} we have
\begin{align*}
S_{\mathfrak{A}_4}(p_1/q_1, p_2/q_2, p_3/q_3) &= \emptyset, \\
\sharp S_{\mathfrak{A}_5}(p_1/q_1, p_2/q_2, p_3/q_3) &= 2.
\end{align*}
By direct computations these are equivalent to the conditions that ($0$) we cannot realize that
\[ 2 \mid p_1, \quad 3 \mid p_2, \quad 3 \mid p_3 \]
by permuting the indices and that only one of the following holds:
\begin{align*}
\text{(i) after possible permuting the indices,} \quad  2 \mid p_1, \quad 3 \mid p_2, \quad 5 \mid p_3, \\
\text{(ii) after possible permuting the indices,} \quad 2 \mid p_1, \quad 5 \mid p_2, \quad 5 \mid p_3, \\
\text{(iii) after possible permuting the indices,} \quad 3 \mid p_1, \quad 3 \mid p_2, \quad 5 \mid p_3, \\
\text{(iv) after possible permuting the indices,} \quad 5 \mid p_1, \quad 5 \mid p_2, \quad 5 \mid p_3. \\
\end{align*}

In the case (i) we have $3 \mid p_1 p_3$ from \eqref{eq_E4}.
If $3 \mid p_1$, then (iii) also holds.
If $3 \mid p_3$, then (0) does not hold.
In the case (ii) we have $5 \mid p_1$ from \eqref{eq_E4}, and (iv) also holds.
In the case (iv) \eqref{eq_E4} does not hold.
Therefore we only have to consider the case (iii).

Let us assume (iii).
If $2 \mid p_1 p_2$, then (i) also holds, hence $2 \nmid p_1, p_2$.
Since
\[ \zeta^{a q_1 + b_1 p_1} = \zeta^{a q_2 + b_1 p_2} = \zeta^{b_1 + b_2 + b_3} = 1, \]
where $b_i$ is an integer such that $\beta'(y_i) = \zeta^{b_i}$ for $i = 1, 2, 3$,
if $2 \mid a$, then $2 \mid b_i$ for all $i$, and $\beta'$ cannot be surjective.
Therefore $2 \nmid a$ and , in consequence, the assumption of Lemma \ref{lem_A} is satisfied.
Comparing \eqref{eq_E3} and Lemma \ref{lem_A}, we have a contradiction, and we obtain the desired conclusion. \\

\noindent
\textbf{Acknowledgement.}
The author would like to express his gratitude to Toshitake Kohno for his encouragement and helpful suggestions.
He also would like to thank Hiroshi Goda, Teruhisa Kadokami, Takayuki Morifuji, Masakazu Teragaito and Yuichi Yamada for fruitful discussions and advices.
This research is supported by JSPS Research Fellowships for Young Scientists.


\end{document}